\documentclass[12pt]{amsart}

\usepackage{amsmath, amssymb, amsthm}
\usepackage{hyperref}
\usepackage{microtype}

\newtheorem{theorem}{Theorem}[section]
\newtheorem{lemma}[theorem]{Lemma}
\newtheorem{corollary}[theorem]{Corollary}
\newtheorem{proposition}[theorem]{Proposition}
\theoremstyle{definition}
\newtheorem{definition}[theorem]{Definition}
\newtheorem{remark}[theorem]{Remark}

\newcommand{\D}{\mathbb{D}}
\newcommand{\Z}{\mathbb{Z}}
\newcommand{\Zi}{\mathbb{Z}[i]}
\newcommand{\C}{\mathbb{C}}
\newcommand{\R}{\mathbb{R}}
\newcommand{\abs}[1]{\left|#1\right|}
\newcommand{\OD}{\mathcal{O}(\D)}
\newcommand{\mR}{\mathcal{R}}
\newcommand{\mRR}{\mathcal{R}_{\mathbb{R}}}

\title[Prime Spectrum of Power Series Rings]{The Prime Spectrum of Rings of\\Integer-Coefficient Power Series on the Disk}
\author{Jon Bannon}\author{David Feldman}
\date{}

\begin{document}

\maketitle

\begin{abstract}
This is a sequel to \cite{PaperI}, where it is shown that a discrete
effective divisor on the open unit disk $\D$ is the zero divisor of a
holomorphic function with integer Taylor coefficients if and only if it
is invariant under complex conjugation.  Building on that realization
theorem and on the unit characterization established there, we study the
prime and maximal spectra of the rings $\mR = \Zi[[z]] \cap \OD$ and
$\mRR = \Z[[z]] \cap \OD$.  We give a trichotomy for maximal ideals,
classify the prime ideals lying outside the class $\mathfrak{P}_1$ of
primes containing a unit-constant-term element, attach to each element
of $\mathfrak{P}_1$ a complete analytic invariant, and construct an
injection from ultrafilters on admissible divisors into $\mathfrak{P}_1$
by an ultraproduct device.  We prove the point-evaluation ideals $P_a$
are prime for all $a \in \D$ and maximal for real $a$, and deduce a
conditional B\'ezout property for disjoint divisors.  All results
recalled from \cite{PaperI} are used only as cited; no proof of that
paper is reproduced here.
\end{abstract}

\section{Introduction}
\label{sec:intro}

The companion paper \cite{PaperI} proves that the only arithmetic
obstruction to prescribing the zero divisor of an integer-coefficient
holomorphic function on the unit disk $\D = \{z \in \C : \abs{z} < 1\}$
is conjugation invariance, and derives a first layer of ring-theoretic
consequences for the coefficient-restricted algebras
\[
  \mR = \Zi[[z]] \cap \OD, \qquad \mRR = \Z[[z]] \cap \OD,
\]
where $\Zi = \Z[i]$.  Those foundational results---the realization
theorems, the characterization of units, the factorization of
$\OD$-elements up to units, and the injectivity of the contraction
$\mathrm{MaxSpec}(\OD) \to \mathrm{Spec}(\mR)$---have been formally
verified.  We take them as given (Section~\ref{sec:recall}) and develop
from them a systematic description of the prime and maximal spectra.

The present paper contains no overlap with \cite{PaperI}: the
construction of realizing functions, its convergence analysis, and the
elementary-factor machinery live entirely in the companion paper and are
invoked here only through the statements collected in
Section~\ref{sec:recall}.  Everything from Section~\ref{sec:trichotomy}
onward is new.

\medskip\noindent\textbf{Results.}
After recalling the needed facts, we establish a trichotomy for maximal
ideals of $\mRR$ and show one of the three types cannot occur
(Section~\ref{sec:trichotomy}).  We then introduce the three ambient
classes of prime ideals $\mathfrak{P} \supseteq \mathfrak{P}_1 \supseteq
\mathfrak{M} \cap \mathfrak{P}_1$ and classify the primes outside
$\mathfrak{P}_1$ completely: they are the ideals $(p)$ (prime,
non-maximal) and $(z,p)$ (maximal), $p$ a rational prime
(Section~\ref{sec:outside}).  For primes \emph{inside} $\mathfrak{P}_1$
we attach an analytic invariant $\mathcal{Z}(P)$---the family of zero
divisors of the unit-constant-term elements of $P$---and prove it is
complete (Section~\ref{sec:invariant}).  The invariant satisfies a
partition property, and we realize every ultrafilter on an admissible
divisor as the invariant of a prime ideal built by an ultraproduct
construction (Section~\ref{sec:ultra}).  Finally we treat the
point-evaluation ideals $P_a = \ker(\mathrm{ev}_a)$, proving primeness
for all $a \in \D$ and maximality for real $a$ (and for all $a$ in the
Gaussian ring $\mR$), and deduce a conditional B\'ezout theorem for
disjoint divisors (Sections~\ref{sec:pointeval}--\ref{sec:bezout}).

Throughout, the arguments are stated for $\mRR$ for concreteness; they
apply verbatim to $\mR$ with $\Zi$ in place of $\Z$ unless noted.  We
write $\mathfrak{n}_0 = \{g : g(0) = 0\}$ for the augmentation ideal, and
$[z^n]f$ for the coefficient of $z^n$ in $f$.

\section{Notation and results recalled from \cite{PaperI}}
\label{sec:recall}

We recall the statements used below.  Proofs are in \cite{PaperI}; the
locators are placeholders to be synchronized with the final numbering of
the companion paper.

\begin{theorem}[Integer realization; {\cite[Thm.~1.1]{PaperI}}]
\label{thm:main}
An effective divisor $D$ on $\D$ is the zero divisor of a holomorphic
function on $\D$ with Taylor coefficients in $\Z$ if and only if $D$ is
invariant under complex conjugation.
\end{theorem}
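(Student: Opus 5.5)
Necessity is immediate, and it is where I would begin. If $f(z)=\sum a_n z^n$ with $a_n\in\Z$, then $\overline{f(\bar z)}=f(z)$ on $\D$. Hence $f(a)=0$ if and only if $f(\bar a)=0$, and the multiplicities agree because the derivatives satisfy $f^{(k)}(\bar a)=\overline{f^{(k)}(a)}$. So the zero divisor of a nonzero such $f$ is conjugation invariant. (The zero function is excluded, since we are talking about a discrete divisor.)

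For sufficiency I would reduce to producing an integer-coefficient function with a \emph{prescribed} zero divisor and no extra zeros, in two stages.

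\emph{Stage one: a real-coefficient function.} Let $D$ be conjugation invariant. Build a Weierstrass product $W$ with zero divisor $D$, pairing each non-real point $a$ with $\bar a$ and using conjugation-symmetric elementary factors. Then $W(z)$ and $\overline{W(\bar z)}$ agree, so $W$ has real Taylor coefficients.

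\emph{Stage two: pushing coefficients into $\Z$.} I would seek $f=W e^{h}$ with $h$ holomorphic on $\D$ and real on $(-1,1)$, so that $f$ is zero-free off $D$ and has real coefficients. Then I would choose the coefficients of $h$ inductively so that $[z^n]f\in\Z$. Write $W=z^m W_0$ with $W_0(0)\neq 0$, and first normalize $W_0(0)=1$ by scaling with a real constant $e^{c}$; note that $e^c$ can be any positive real, and the sign can be absorbed by multiplying by $-1$. The $n$-th coefficient of $W_0e^{h}$ equals $h_n W_0(0)$ plus a polynomial in $h_1,\dots,h_{n-1}$ and the coefficients of $W_0$. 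So at step $n$ I would choose $h_n$ in an interval of length $1$, making the coefficient the nearest admissible integer.

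\textbf{Main obstacle: convergence.} This greedy choice gives $|h_n|\le$ roughly $|[z^n]$ of the previous partial product$|+1$, which can grow too fast for $h$ to remain holomorphic on $\D$. The control I would aim for is subexponential growth, $\limsup|h_n|^{1/n}\le 1$. To get it, I would instead correct with additive perturbations $f=W(g+\text{small})$ in the style of rounding power series with integer coefficients, handled block by block. On dyadic coefficient ranges I would use polynomial multiples of $W$ times a zero-free correction. At each block the correction would be chosen so that rounding errors of size $\le 1$ in the coefficients contribute a function bounded by $C_k$ on $|z|\le r_k$, with $r_k\uparrow 1$ and $\sum$ errors converging locally uniformly. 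Finally, Hurwitz/Rouché-type estimates on the annuli $r_{k-1}\le|z|\le r_k$ would ensure that no new zeros appear and no prescribed zeros move.

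The delicate balance, which I expect to be the heart of the argument, is that integer rounding errors do not decay: a perturbation with bounded integer-scale coefficients only converges on $\D$, not on a larger disk. One must therefore keep $|W e^{h}|$ large enough on each annulus, relative to the error accumulated there, to dominate it. That requires growth of the realizing function toward $\partial\D$, and it is why having $e^{h}$ grow rapidly, rather than merely be bounded, is the key choice.
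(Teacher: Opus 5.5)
Your necessity argument and Stage one are fine. The gap is in Stage two: the sufficiency argument you actually commit to does not close, and you discard a scheme that already works because of an obstacle that is not there. After normalizing $W_0(0)=1$ and $h_0=0$, you correctly note that
\[
  [z^n]\bigl(W_0e^{h}\bigr) = h_n + A_n, \qquad A_n=[z^n]\Bigl(W_0\exp\bigl(\textstyle\sum_{j<n}h_jz^j\bigr)\Bigr)\in\R .
\]
Integrality constrains only the residue of $h_n$ modulo $1$, not its size. Put $h_n=\lfloor A_n+\tfrac12\rfloor-A_n$; then $\abs{h_n}\le\tfrac12$ for every $n\ge1$, however large the $A_n$ are. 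Your bound $\abs{h_n}\lesssim\abs{A_n}+1$ is what you get by steering each coefficient toward a \emph{fixed} integer such as $0$, but you are free to take the nearest one. With bounded coefficients, $h$ has radius of convergence at least $1$, so $h\in\OD$. Then $f=z^mW_0e^{h}$ is holomorphic on $\D$, has coefficients in $\Z$, and has zero divisor exactly $D$, because $e^{h}$ is zero-free. That completes sufficiency. The observation you missed is that the triangular map $h\mapsto([z^n]W_0e^{h})_n$ has constant diagonal $W_0(0)e^{h_0}$. Hence the rounding can be done in the exponent with uniformly bounded corrections, and zero-freeness comes for free. (The present paper only cites this theorem from the companion paper, so there is no in-text proof to compare routes against.)

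The additive block scheme you substitute is not a proof as it stands, and its key step fails. Write $f=W(g+\delta)$ with $g$ zero-free. Ruling out new zeros by Rouch\'e on circles $\abs{z}=r_k\to1$ requires $\abs{\delta}<\abs{g}$ there. You propose to get this from uniform bounds on $\delta$ together with growth of $g=e^{h}$ toward $\partial\D$. But $\log\abs{g}$ is harmonic, so the mean value property gives $\min_{\abs{z}=r}\abs{g}\le\abs{g(0)}$ for every $r<1$. Thus no zero-free function on $\D$ grows uniformly toward the boundary. Meanwhile, rounding errors of bounded size in the coefficients give $\delta$ no decay, and $\sup_{\abs{z}=r}\abs{\delta}$ is typically unbounded as $r\to1$. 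So the ``delicate balance'' of your last paragraph cannot be struck in the way you describe; fortunately it is unnecessary.
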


\begin{theorem}[Gaussian-integer realization; {\cite[Thm.~1.2]{PaperI}}]
\label{thm:Zi}
Every effective divisor on $\D$ is the zero divisor of a holomorphic
function on $\D$ with Taylor coefficients in $\Zi$.
\end{theorem}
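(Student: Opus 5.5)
Given an arbitrary effective divisor $D$ on $\D$, I will reduce to Theorem~\ref{thm:main} by symmetrization. Let $\bar D$ be the conjugate divisor, $\bar D(w) = D(\bar w)$, and put $E = D + \bar D$. Then $E$ is effective, conjugation invariant, and discrete, since $D$ and $\bar D$ both are. Theorem~\ref{thm:main} gives $F \in \Z[[z]] \cap \OD$ with $\mathrm{div}(F) = E$. This $F$ vanishes too much, and the task is to split it into a $D$-part and a $\bar D$-part while keeping Gaussian integer coefficients. The naive split $F = g \cdot \tilde g$ with $\mathrm{div}(g) = D$ does not control the coefficients of $g$, so the splitting has to be done differently.

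My first attempt is to work at the level of the construction behind Theorem~\ref{thm:main}, which is only available as cited, so I will avoid it. Instead I will exploit the extra freedom in $\Zi$ directly.

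Write $D = D_{\R} + D_+ + D_-$, where $D_{\R}$ is supported on the real axis, $D_+$ on the upper half disk, and $D_-$ on the lower half disk. Consider the effective divisor
\[
  D' = D_{\R} + D_+ + \overline{D_+} + D_- + \overline{D_-},
\]
which is conjugation invariant, so it is realized by some $F \in \Z[[z]]$. The excess $D' - D = \overline{D_+} + \overline{D_-}$ is supported off the real axis. To remove it I would need a unit-like multiplier with $\Zi$ coefficients. A multiplier cannot cancel zeros, however, so this approach fails as stated.

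The correct move is a substitution trick. Pick a Möbius-free change of variable that breaks conjugation symmetry yet preserves Gaussian integrality: replace $z$ by $iz$. If $h(z) \in \Z[[z]]$, then $h(iz) \in \Zi[[z]]$, and $\mathrm{div}(h(iz))$ is the rotation of $\mathrm{div}(h)$ by $-\pi/2$. A conjugation-invariant divisor rotated by a quarter turn is invariant under reflection in the imaginary axis rather than the real axis. Intersecting the two symmetries is still too restrictive on its own. I will combine them by choosing $F_1$ realizing a conjugation-invariant divisor and $F_2(iz)$ realizing a $(w\mapsto -\bar w)$-invariant divisor, and then take a suitable $\Zi$-linear combination $F_1 + c\,F_2(iz)$ whose zeros I control.

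Controlling the zeros of a sum is the main obstacle, and I expect it to require analytic input (Rouché-type estimates) that Section~\ref{sec:recall} does not supply. Failing that, I would fall back on the cited source. Theorem~\ref{thm:Zi} is itself recorded as \cite[Thm.~1.2]{PaperI}, and within this paper it is taken as given. The honest proof plan is therefore to invoke the companion paper's construction, which runs the realizing scheme of Theorem~\ref{thm:main} with the $\Zi$-lattice in place of $\Z$. In that scheme the conjugation constraint disappears because the correction terms can be chosen from $\Zi$ near any point, rather than only in conjugate pairs.
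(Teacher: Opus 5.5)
The paper does not prove this statement either: it imports it from the companion paper and uses it only as a black box. Your closing fallback, invoking that paper, therefore matches the paper's treatment, and it is the only part of your proposal that establishes anything. Everything before it is exploratory, and the one concrete repair you propose cannot work even in principle.

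\textbf{The additive route.} In $F_1 + c\,F_2(iz)$ the coefficient $c$ ranges over the discrete set $\Zi$. Rouch\'e- or Hurwitz-type estimates only count zeros in regions or locate them approximately. They never force a zero to sit exactly at a prescribed point with prescribed multiplicity. Still less can they do this for an infinite divisor accumulating at $\partial\D$ while excluding every other zero. Prescribing zeros exactly under an integrality constraint on the coefficients is precisely the hard content of the companion paper, and perturbation arguments cannot recover it.

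\textbf{The multiplicative route.} The product $F_1(z)F_2(iz)$ does have a controlled divisor, but it is too rigid. Substitutions $z \mapsto \zeta z$ with $\zeta \in \Zi^\times$ produce only two reflection symmetries, $w \mapsto \bar w$ and $w \mapsto -\bar w$. So products of such functions, with the factors supplied by Theorem~\ref{thm:main}, realize only divisors $D = E_1 + E_2$ with $E_1$, $E_2$ effective, $E_1$ invariant under $w\mapsto\bar w$ and $E_2$ invariant under $w\mapsto-\bar w$. This already misses the single point $D = [a]$ whenever $\mathrm{Re}\,a \cdot \mathrm{Im}\,a \neq 0$.

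Your final sentence describes how the companion paper's construction supposedly works: a $\Zi$-lattice version of the realization scheme in which correction terms need not come in conjugate pairs. That is a guess nothing in this paper supports, and it should not be offered as the argument.

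If you want a derivation from what Section~\ref{sec:recall} actually records, the honest one is two lines. The Weierstrass theorem gives $f \in \OD$ with $\mathrm{div}(f) = D$. The recalled factorization $f = g\,u$ with $g \in \mR$ and $u \in \OD^\times$ then gives $\mathrm{div}(g) = D$. However, that factorization is equivalent to the theorem: conversely, take $g$ realizing $\mathrm{div}(f)$ and set $u = f/g$. So this derivation only relocates the dependence on the companion paper rather than removing it.
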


In particular (Theorem~\ref{thm:main}), every admissible divisor in $\D$
is the zero set, with multiplicity, of some element of $\mRR$ with
constant term $1$; this is the only input to the partition and
ultraproduct arguments below.

\begin{proposition}[Units; {\cite[Prop.~4.1]{PaperI}}]
\label{prop:units}
An element $f \in \mR$ is a unit if and only if $f(0) \in \Zi^\times =
\{\pm 1, \pm i\}$ and $f$ is nowhere vanishing on $\D$.  The analogous
statement holds for $\mRR$, with $\Zi^\times$ replaced by $\Z^\times =
\{\pm 1\}$.
\end{proposition}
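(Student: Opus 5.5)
Both directions reduce to elementary facts about power series with integer coefficients. I will give the argument for $\mR$; the case of $\mRR$ is identical with $\Z^\times=\{\pm1\}$ in place of $\Zi^\times$.

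\emph{Necessity.} Suppose $fg=1$ with $g\in\mR$. Evaluating at each $w\in\D$ gives $f(w)g(w)=1$, so $f$ has no zeros on $\D$. Evaluating at $0$ gives $f(0)g(0)=1$ with $f(0),g(0)\in\Zi$, so $f(0)\in\Zi^\times$.

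\emph{Sufficiency.} Suppose $f(0)=u\in\Zi^\times$ and $f$ has no zeros on $\D$. Since $f$ is zero-free on the simply connected domain $\D$, the function $h=1/f$ is holomorphic on $\D$, so $h\in\OD$. It remains to show that $h$ has coefficients in $\Zi$.

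The coefficient statement is purely formal. Write $f=u(1-zq)$ with $q\in\Zi[[z]]$, which is possible because $f(0)=u$. In $\Zi[[z]]$ the series $1-zq$ is invertible, with inverse $\sum_{k\ge0}(zq)^k$. This sum is a well-defined formal series with coefficients in $\Zi$, because $(zq)^k$ contributes only to coefficients of degree $\ge k$. Hence $f$ has the formal inverse
\[
  H=u^{-1}\sum_{k\ge0}(zq)^k\in\Zi[[z]].
\]
Equivalently, the coefficients $h_n$ of $H$ satisfy the recursion $u h_n=-\sum_{j\ge1}a_j h_{n-j}$, where $f=\sum_j a_j z^j$. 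Since $u^{-1}\in\Zi$, this recursion shows inductively that every $h_n\in\Zi$.

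Finally, $H$ coincides with the Taylor series of $h$ at $0$. Near $0$ we have $fh=1$, and Taylor expansion is compatible with products of convergent series. The Taylor series of $h$ is therefore a formal inverse of $f$, and formal inverses are unique, so it equals $H$. As $h$ is holomorphic on all of $\D$, its Taylor series converges on $\D$. Thus $h\in\Zi[[z]]\cap\OD=\mR$, and $f$ is a unit.

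The only point needing care is this identification of the formal inverse with the analytic inverse. It is what keeps the analytic condition (no zeros on $\D$) and the arithmetic condition ($f(0)$ a unit) separate: the zero-free hypothesis ensures $h$ lies in $\OD$, while $f(0)\in\Zi^\times$ ensures $h$ has coefficients in $\Zi$.
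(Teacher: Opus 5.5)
Your proof is correct. The paper does not reprove this statement; it imports it from the companion paper. Your argument uses the same mechanism the paper runs in the proof of Lemma~\ref{lem:unitquotient}: zero-freeness puts $1/f$ in $\OD$, and the unit constant term keeps the coefficient recursion for the inverse inside $\Zi$ (resp.\ $\Z$). (The appeal to simple connectivity of $\D$ is unnecessary, since $1/f$ is holomorphic wherever $f \neq 0$, but it does no harm.)
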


We also use, from \cite{PaperI}, that every $f \in \OD$ factors as $f =
g\,u$ with $g \in \mR$ and $u \in \OD^\times$, that consequently the
contraction map
\[
  \phi \colon \mathrm{MaxSpec}(\OD) \to \mathrm{Spec}(\mR), \qquad
  \mathfrak{m} \mapsto \mathfrak{m} \cap \mR,
\]
is injective, and that $\mR/\mathfrak{n}_0 \cong \Zi$ (respectively
$\mRR/\mathfrak{n}_0 \cong \Z$).  These frame the picture but are not
needed in the proofs that follow.

\section{A trichotomy for maximal ideals}
\label{sec:trichotomy}

\begin{proposition}[Trichotomy of maximal ideals]
\label{prop:trichotomy}
Every maximal ideal $M \subset \mRR$ falls into exactly one of the
following types:
\begin{enumerate}
\item[(i)] $z \in M$,
\item[(ii)] $n \in M$ for some integer $n$ with $\abs{n} > 1$, but
  $z \notin M$,
\item[(iii)] neither $z$ nor any integer constant belongs to $M$.
\end{enumerate}
Moreover, type~\emph{(ii)} cannot occur: no maximal ideal of $\mRR$
contains an integer but not $z$.
\end{proposition}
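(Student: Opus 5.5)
The trichotomy itself is a case split on which of $z$ and the nonzero integers lie in $M$. The cases are mutually exclusive and exhaustive once we note that $M$ is proper, so $\pm1\notin M$. Also, if $z\in M$ together with some integer $n$, we are in type (i) by definition. All the content is in the last claim, which I would prove by contradiction. Suppose $M$ is maximal, $n\in M$ with $\abs{n}>1$, and $z\notin M$. Since $M$ is prime, $M\cap\Z$ is a nonzero prime ideal of $\Z$, so some rational prime $p$ lies in $M$, and $\mRR/M$ is a field of characteristic $p$. Since $z\notin M$ and $M$ is maximal, $M+(z)=\mRR$. Hence there is $g\in\mRR$ with $q:=1-zg\in M$. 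The goal is to show that $(p,q)=\mRR$, which contradicts properness of $M$.

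By Proposition~\ref{prop:units}, it suffices to produce an element $u\in\mRR$ with $u\equiv q^{N}\pmod{p\mRR}$ for some $N\ge1$, with $u(0)=\pm1$ and $u$ nowhere vanishing on $\D$. Then $u$ is a unit lying in $M$, which is impossible. The first tool is a Frobenius congruence inside $\mRR$, not merely inside $\Z[[z]]$. For $h\in\mRR$, the series $h(z)^p-h(z^p)$ has all coefficients divisible by $p$, and both terms are holomorphic on $\D$. So $(h^p-h(z^p))/p$ is holomorphic on $\D$ with integer coefficients, and therefore lies in $\mRR$. Iterating gives
\[
q^{p^k}\equiv 1-z^{p^k}g(z^{p^k}) \pmod{p\mRR}.
\]
Thus modulo $p$ we may replace $q$ by an element $1-z^{p^k}g(z^{p^k})$ whose nonconstant part is pushed to high order. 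I would also use the freedom to modify $g$ by elements of $p\mRR$, and by multiples of $q$ itself, to arrange that the perturbation term is small in modulus.

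The main obstacle is exactly this step: turning ``congruent to $1$ modulo $z$ and modulo $p$'' into ``congruent modulo $p$ to a nonvanishing function.'' The substitution $z\mapsto z^{p^k}$ controls $\abs{z^{p^k}g(z^{p^k})}$ only on compact subdisks, while $g$ may be unbounded near $\partial\D$. My first attempt would be to reduce $g$ modulo $p$, so that its coefficients lie in $\{0,\dots,p-1\}$, noting that this reduction stays in $\mRR$ because the difference is $p$ times a holomorphic series. I would then truncate $g=g_N+z^{N}r$ and use the congruence to absorb the tail into a $p$-multiple. If that fails, the fallback is Theorem~\ref{thm:main}. I would take the conjugation-invariant zero divisor $D$ of a suitable representative $q+p s$, and choose $s\in\mRR$, via a realizing function for $D$ with constant term $1$, so that $q+ps=q(1+p\,t)$ with $t$ cancelling the zeros of $q$ up to a unit of $\OD$. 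I would then check that the result has integer coefficients and constant term $\pm1$.
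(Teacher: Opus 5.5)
Your setup is sound. $M\cap\Z$ contains a prime $p$, and $M+(z)=\mRR$ gives $q=1-zg\in M$, so it suffices to show $(p,q)=\mRR$. The gap is that the proposal never proves this. You reduce it to finding a nowhere-vanishing $u\equiv q^N \pmod{p\mRR}$, call that step the main obstacle, and offer only tentative strategies, none of which works as written:
\begin{itemize}
\item The Frobenius congruence and the coefficient reduction do not control $z^{p^k}g(z^{p^k})$ near $\partial\D$. Bounded coefficients only give $\abs{g(w)}\le (p-1)/(1-\abs{w})$.
\item The tail $z^N r$ in a truncation $g=g_N+z^N r$ is not a multiple of $p$, so it cannot be ``absorbed.''
\item The fallback only restates the goal. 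If $t\in\mRR$, then $q(1+pt)$ vanishes wherever $q$ does. If $t$ is allowed poles, you must still show that $s=qt\in\mRR$ makes $q+ps$ zero-free, which is the original problem. Theorem~\ref{thm:main} prescribes zeros and gives no control of this kind.
\end{itemize}

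The missing idea is that no analysis is needed, because invertibility modulo $p$ is purely formal. You already noted that series with coefficients in $\{0,\dots,p-1\}$ lie in $\mRR$. That is exactly the statement that coefficientwise reduction gives $\mRR/p\mRR\cong\mathbb{F}_p[[z]]$.
\begin{itemize}
\item In $\mathbb{F}_p[[z]]$ the element $\bar q=1-z\bar g$ is a unit, with inverse $\sum_{k\ge0}(z\bar g)^k$.
\item Lift this inverse to $h\in\mRR$ with coefficients in $\{0,\dots,p-1\}$.
\item Then $qh-1$ is holomorphic on $\D$ with every coefficient divisible by $p$, so $qh-1\in p\mRR\subseteq M$.
\item Since $qh\in M$, you get $1\in M$, a contradiction.
\end{itemize}
Equivalently, $\mathbb{F}_p[[z]]$ is local with maximal ideal $(\bar z)$, so the only maximal ideal of $\mRR$ containing $p$ is $(p,z)$.

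The paper's proof rests on the same isomorphism $\mRR/p\mRR\cong\mathbb{F}_p[[z]]$, concluding from the proper ideal $(z)$ there. Its intermediate claim $M\subseteq p\mRR$, justified via ``$f-f(0)\in M$,'' is not valid as written, so the locality argument is the sound way to finish.

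Your unit-based reduction could in fact be completed. Every element of $1+z\mathbb{F}_p[[z]]$ equals $\prod_{k\ge1}(1-z^k)^{e_k}$ with $e_k\in\{0,\dots,p-1\}$, chosen greedily coefficient by coefficient. The integral lift of that product is a nowhere-vanishing unit of $\mRR$ congruent to $q$ modulo $p\mRR$. But this is far more work than the one-line inverse requires.
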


\begin{proof}
The three types are mutually exclusive and exhaustive by definition.
It remains to show type~(ii) cannot occur.  Suppose $n \in M$ and $z
\notin M$.  For any $f \in M$, since $f - f(0) \in M$ and $z \notin M$
and $M$ is prime, $(f - f(0))/z \in M$.  Iterating, every coefficient
of $f$ is divisible by $n$, so $M \subseteq n\mRR$.  But $n \in M$
gives $n\mRR \subseteq M$, hence $M = n\mRR$.  For $n = p$ prime,
$\mRR/p\mRR \cong \mathbb{F}_p[[z]]$, which has proper ideal $(z)$,
contradicting maximality of $M = p\mRR$.  For composite $n$, a prime
factor $p \mid n$ gives $p \in M$ and the same argument applies.
\end{proof}

\begin{definition}
A maximal ideal of type~(iii) is called a \emph{type~(iii) maximal
ideal}.
\end{definition}

We fix, once and for all, three classes of prime ideals of $\mRR$,
using fraktur notation:
\begin{itemize}
\item $\mathfrak{P} = \mathrm{Spec}(\mRR)$: all prime ideals of $\mRR$.
\item $\mathfrak{P}_1$: prime ideals $P$ containing at least one element
  of the form $1 + zF$ (i.e.\ with unit constant term).
\item $\mathfrak{M}$: maximal ideals of $\mRR$.
\end{itemize}
These satisfy $\mathfrak{M} \cap \mathfrak{P}_1 \subseteq \mathfrak{P}_1
\subseteq \mathfrak{P}$.  Every type~(iii) maximal ideal lies in
$\mathfrak{P}_1$ by Proposition~\ref{prop:interesting}(i) below;
type~(i) maximal ideals (containing $z$) do not contain
unit-constant-term elements, since if $z \in P$ and $1 + zF \in P$ then
$1 = (1 + zF) - z \cdot F \in P$, contradicting $P$ proper.  Whether
every element of $\mathfrak{P}_1$ is maximal is open
(Section~\ref{sec:open}).

\section{Prime ideals outside $\mathfrak{P}_1$}
\label{sec:outside}

\begin{proposition}[Prime ideals outside $\mathfrak{P}_1$]
\label{prop:primes_outside}
Every prime ideal $P \in \mathfrak{P} \setminus \mathfrak{P}_1$ is of
exactly one of the following forms:
\begin{enumerate}
\item[(a)] $P = (p)$ for a rational prime $p$, i.e.\ the ideal generated
  by the constant series $p$.  These are prime but not maximal.
\item[(b)] $P = (z, p)$ for a rational prime $p$.  These are maximal.
\end{enumerate}
\end{proposition}

\begin{proof}
Let $P \in \mathfrak{P} \setminus \mathfrak{P}_1$.  Since $P \notin
\mathfrak{P}_1$, no element of $P$ has constant term $\pm 1$, so the set
of constant terms $\{f(0) : f \in P\}$ is an ideal $(d) \subseteq \Z$
with $d \geq 2$.

\medskip\noindent\emph{Case (b): $z \in P$.}  The image of $P$ in
$\mRR/(z) \cong \Z$ is a prime ideal of $\Z$, hence $(p)$ for some prime
$p$.  Thus $P = (z, p)$.  Since $\mRR/(z,p) \cong \Z/(p) = \mathbb{F}_p$
is a field, $P$ is maximal.

\medskip\noindent\emph{Case (a): $z \notin P$.}  Since $P$ is prime and
$z \notin P$: for any $f \in P$ with $f = z^k g$ and $g(0) \neq 0$,
primeness gives $g \in P$.  Thus $P$ is closed under removing leading
powers of $z$.

We claim every coefficient of every element of $P$ is divisible by $p$.
Consider the quotient $\mRR/(p)$, whose elements are power series $\sum
\bar a_n z^n$ with $\bar a_n \in \Z/(p) = \mathbb{F}_p$; since $p$ is a
nonzero integer and hence invertible in $\C$, convergence on $\D$ is
automatic.  Let $\pi \colon \mRR \to \mRR/(p)$ be the quotient map, and
$\bar P = \pi(P)$.  Since $\pi$ is surjective and $P$ is prime, $\bar P$
is a prime ideal of $\mRR/(p)$.

Every element of $\bar P$ has zero constant term: if $f \in P$ then
$f(0) \in (p)\Z$, so $\pi(f)(0) = \overline{f(0)} = 0$.  Since $z
\notin P$, $\bar z = \pi(z) \notin \bar P$.  For any $\bar f \in \bar P$
with $\bar f(0) = 0$, write $\bar f = \bar z \cdot \bar g$; then $\bar z
\notin \bar P$ and $\bar P$ prime give $\bar g \in \bar P$, so $\bar
g(0) = \bar a_1 = 0$.  Iterating, every coefficient $\bar a_n = 0$ in
$\Z/(p)$, i.e.\ $p \mid a_n$ for all $n$.  Hence $P \subseteq p\mRR$.

Since $(d) = (p)$, there exists $f_0 \in P$ with $f_0(0) = p$.  As all
coefficients of $f_0$ are divisible by $p$, write $f_0 = p \cdot g_0$
with $g_0(0) = 1$.  From $f_0 = p g_0 \in P$ and primeness, either $p
\in P$ or $g_0 \in P$; the latter forces $g_0(0) = 1 \in (p)\Z$,
impossible.  Hence $p \in P$, so $p\mRR \subseteq P$ and $P = p\mRR =
(p)$.  Finally $\mRR/(p) \cong \mathbb{F}_p[[z]]$ (the coefficientwise
reduction is a surjection with kernel $(p)$; surjectivity holds because
any $\sum \bar a_n z^n$ lifts to $\sum a_n z^n$ with $a_n \in
\{0,\ldots,p-1\}$, of radius of convergence $\geq 1$), which is not a
field, so $P = (p)$ is prime but not maximal.
\end{proof}

\begin{remark}
Proposition~\ref{prop:primes_outside} yields
\[
  \mathfrak{P} \setminus \mathfrak{P}_1
  = \{(p) : p \text{ prime}\} \cup \{(z,p) : p \text{ prime}\}.
\]
The ideals $(z,p)$ are the type~(i) maximal ideals.  Each $(p)$ is a
non-maximal prime, properly contained in $(z,p)$ but in no type~(iii)
maximal ideal (a type~(iii) maximal ideal contains no integer constant,
yet $(p)$ contains $p$).
\end{remark}

\section{Structure of primes in $\mathfrak{P}_1$}
\label{sec:invariant}

We first record a consequence of the unit characterization from
\cite{PaperI}.

\begin{lemma}[Unit quotient lemma]
\label{lem:unitquotient}
If $f_1, f_2 \in \mRR$ both have constant term $1$ and the same zero set
$Z(f_1) = Z(f_2)$ with multiplicities, then $f_1/f_2 \in \mRR^\times$.
\end{lemma}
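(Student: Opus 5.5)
The plan is to show that $h = f_1/f_2$ is an element of $\mRR$ and then invoke the unit characterization, Proposition~\ref{prop:units}.

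\emph{Holomorphy and the constant term.} Since $Z(f_1) = Z(f_2)$ with multiplicities, $h$ is holomorphic on $\D$: at each zero of $f_2$ of order $m$, $f_1$ also vanishes to order exactly $m$, so the singularity is removable. The same argument applies to $f_2/f_1$, hence $h$ is nowhere vanishing on $\D$. Moreover $f_2(0) = 1 \neq 0$, so $h$ is holomorphic near $0$ with $h(0) = f_1(0)/f_2(0) = 1$.

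\emph{Integer coefficients.} Write $f_2 = 1 + zF$ with $F \in \Z[[z]]$. Since its constant term $1$ is a unit of $\Z$, $f_2$ is invertible in the formal power series ring $\Z[[z]]$, with inverse
\[
  f_2^{-1} = \sum_{k \ge 0} (-zF)^k \in \Z[[z]].
\]
This sum is well defined $z$-adically, and its coefficients are determined recursively by integer operations. Hence the formal product $f_1 \cdot f_2^{-1}$ lies in $\Z[[z]]$. This formal series agrees with the Taylor expansion of the holomorphic function $h$ at $0$, because Taylor expansion at $0$ is a ring homomorphism from germs of holomorphic functions at $0$ into $\C[[z]]$ and $\C[[z]]$ is an integral domain. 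The Taylor series of $h$ converges on all of $\D$ because $h \in \OD$. Therefore $h \in \Z[[z]] \cap \OD = \mRR$.

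\emph{Conclusion.} Now $h \in \mRR$, $h(0) = 1 \in \Z^\times$, and $h$ has no zeros on $\D$, so Proposition~\ref{prop:units} gives $h \in \mRR^\times$. One could avoid the proposition by noting that $f_2/f_1$ also lies in $\mRR$ by the symmetric argument, so $h$ has an explicit inverse in $\mRR$. The only delicate point is identifying the formal inverse in $\Z[[z]]$ with the Taylor series of the holomorphic quotient, so that integrality and convergence on $\D$ hold simultaneously. This is settled by the uniqueness of Taylor expansions at the origin, where $f_2(0) \neq 0$.
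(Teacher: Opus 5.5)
Your proof is correct and follows essentially the same route as the paper. Equal divisors make $h = f_1/f_2$ holomorphic and zero-free on $\D$ with $h(0)=1$; the Taylor coefficients of $h$ are integers because $f_2$ has constant term $1$, and your formal inverse $\sum_{k\ge 0}(-zF)^k$ in $\Z[[z]]$ is the paper's recursion $b_n = [z^n]f_1 - \sum_{k=1}^n a_k b_{n-k}$ in packaged form; Proposition~\ref{prop:units} then finishes. Two points in your version are cleaner than the paper, which leaves them implicit: you explicitly identify the formal quotient with the Taylor series of $h$, and you observe that the symmetric argument puts $f_2/f_1$ in $\mRR$, so Proposition~\ref{prop:units} is not even needed.
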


\begin{proof}
Write $f_2 = \sum_{n \geq 0} a_n z^n$ with $a_0 = 1$ and $a_n \in \Z$.
The ratio $h = f_1/f_2$ extends holomorphically to $\D$ (the zeros of
$f_2$ are exactly those of $f_1$ with equal multiplicities, so $h$ has
no poles), is nowhere vanishing (identical zero sets), and $h(0) = 1$.
Writing $h = \sum_{n \geq 0} b_n z^n$ and comparing coefficients in $f_1
= f_2 \cdot h$ gives $b_0 = 1$ and
\[
  b_n = [z^n]f_1 - \sum_{k=1}^n a_k b_{n-k}, \quad n \geq 1.
\]
Since $[z^n]f_1 \in \Z$, $a_k \in \Z$, and $b_0 \in \Z$, induction gives
$b_n \in \Z$ for all $n$.  Because $h$ is nowhere vanishing on $\D$, $1/h
\in \OD$, so $h \in \mRR$.  Being nowhere vanishing with $h(0) = 1 \in
\Z^\times$, $h$ is a unit of $\mRR$ by Proposition~\ref{prop:units}.
\end{proof}

\begin{proposition}[Structure of type~(iii) maximal ideals]
\label{prop:interesting}
Let $M$ be a type~(iii) maximal ideal of $\mRR$.  Then:
\begin{enumerate}
\item[(i)] $M$ contains an element of the form $1 + zF$.  Indeed the set
  $I = \{g(0) : g \in M\} \subset \Z$ is an ideal $(d)$, $d \geq 0$; the
  cases $d = 0$ and $d > 1$ both contradict the type~(iii) hypothesis,
  so $d = 1$.
\item[(ii)] An element $f = 1 + zF \in \mRR$ is a unit of $\mRR$ if and
  only if it is nowhere vanishing on $\D$; this is the $\mRR$ case of
  Proposition~\ref{prop:units}, the constant term $1 \in \Z^\times$
  being automatic.  In particular, elements of $M$ of the form $1 + zF$
  are non-units, hence vanish somewhere in $\D$.
\item[(iii)] The elements of the form $1 + zF$ lying in $M$ generate $M$
  as an ideal.
\item[(iv)] Two elements of $\mRR$ of the form $1 + zF$ with the same
  zero set differ by a unit of $\mRR$ (Lemma~\ref{lem:unitquotient}).
\end{enumerate}
\end{proposition}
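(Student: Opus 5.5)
We prove the four parts in order; (ii) and (iv) are immediate from earlier results, and the real content lies in (i) and (iii).

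For (i), let $M$ be a type~(iii) maximal ideal. The set $I=\{g(0):g\in M\}$ is the image of $M$ under the ring homomorphism $\mathrm{ev}_0\colon\mRR\to\Z$, hence an ideal $(d)$ of $\Z$ with $d\ge 0$. Both unwanted cases lead to a contradiction.
\begin{itemize}
\item If $d=0$, then $M\subseteq\mathfrak{n}_0$. Now $\mathfrak{n}_0=z\mRR$: if $g(0)=0$, then $g/z$ has integer coefficients and is holomorphic on $\D$. Since $z\notin M$, maximality of $M$ gives $M=\mathfrak{n}_0$, so $z\in M$. This is a contradiction.
\item If $d>1$, then $M+(d)$ is a proper ideal. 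Every element of $M+(d)$ has constant term in $(d)$, so $1\notin M+(d)$. By maximality $M+(d)=M$, so $d\in M$, contradicting type~(iii).
\end{itemize}
Hence $d=1$, and some $g\in M$ has $g(0)=1$, i.e.\ $g=1+zF$. (If $g(0)=-1$ we would replace $g$ by $-g$.)

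For (ii), apply Proposition~\ref{prop:units} with $f(0)=1\in\Z^\times$. An element $1+zF\in M$ cannot be a unit, since $M$ is proper, so it must vanish somewhere in $\D$.

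For (iii), fix $u=1+zF_0\in M$, available by (i), and let $f\in M$ be arbitrary. Set $c=f(0)\in\Z$ and consider
\[
f' = f+(1-c)\,u .
\]
It lies in $M$ and has constant term $c+(1-c)=1$, so $f'$ is of the form $1+zF$. Then
\[
f = f' - (1-c)\,u
\]
is a $\mRR$-linear (indeed $\Z$-linear) combination of unit-constant-term elements of $M$. So these elements generate $M$.

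For (iv), invoke Lemma~\ref{lem:unitquotient}.

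The only step needing care is $d=0$ in (i), namely the identification $\mathfrak{n}_0=z\mRR$. Holomorphy of $g/z$ follows from the removable singularity at $0$, and its coefficients are integers because they are just the shifted coefficients of $g$. Everything else is bookkeeping.
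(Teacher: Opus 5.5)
Your proof is correct, and it differs from the paper's in parts (i) and (iii).

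\textbf{Part (i).} The paper handles $d=0$ by primeness: it writes a nonzero $f\in M$ as $z^k u$ with $u(0)\neq 0$ and strips off factors of $z$, forcing $z\in M$ or $u\in M$. For $d>1$ it uses an embedding $\mRR/M\hookrightarrow\Z/(d)$ to get $d=p$ prime, and then identifies $M$ with the kernel of reduction mod $p$. You use maximality directly in both cases. From $M\subseteq\mathfrak{n}_0$ you get $M=\mathfrak{n}_0\ni z$. From $M\subseteq M+(d)\neq\mRR$ you get $d\in M$. This is shorter. The paper's embedding also silently needs exactly this maximality step, and once you have it, $d\in M$ already follows. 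The step you flag as needing care, $\mathfrak{n}_0=z\mRR$, is not actually needed. It suffices that $\mathfrak{n}_0$ is proper and contains $z$.

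\textbf{Part (iii).} The paper claims that every element of $M$ has the form $z^k h$ with $h\in M$ and $h(0)=\pm1$. That is false as stated: for $h=1+zF\in M$, the element $2h\in M$ has constant term $2$ and no factor of $z$. Your adjustment $f=(f+(1-c)u)-(1-c)u$ is what actually proves (iii). It is the explicit form of the reasoning the paper only sketches later, in the proof of Proposition~\ref{prop:prime1invariant}, namely that the constant terms of $P$ fill out $\Z$. Your argument also proves more. Any ideal of $\mRR$ containing a single element $1+zF_0$ is generated by its unit-constant-term elements, with no use of primeness or of $z\notin M$.
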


\begin{proof}
(i) The set $I = \{g(0) : g \in M\}$ is an ideal of $\Z$ (closed under
addition since $M$ is an additive subgroup, and under integer
multiplication since $\Z \subset \mRR$ and $M$ is an ideal), so $I = (d)$
for some $d \geq 0$.

If $d = 0$: every element of $M$ has zero constant term.  Let $f \in M$
be nonzero; it vanishes to some finite order $k \geq 1$ at $0$, so $f =
z^k u$ with $u(0) \neq 0$.  Since $f = z \cdot (z^{k-1}u) \in M$ and $M$
is prime, either $z \in M$ or $z^{k-1}u \in M$; iterating, either $z \in
M$ or $u \in M$.  But $u \in M$ gives $u(0) \in I = (0)$, contradicting
$u(0) \neq 0$.  Hence $z \in M$, contradicting type~(iii).

If $d > 1$: evaluation at $0$ induces an injection $\mRR/M
\hookrightarrow \Z/(d)$.  As $M$ is maximal, $\mRR/M$ is a field, and a
field embeds in $\Z/(d)$ only if $d = p$ is prime.  Then $K =
\ker(\mRR \xrightarrow{f \mapsto f(0) \bmod p} \Z/(p))$ is a proper ideal
containing both $M$ and the constant series $p$; maximality of $M$ gives
$K = M$, so $p \in M$, contradicting type~(iii).

Hence $d = 1$, and some element of $M$ has constant term $\pm 1$;
negating if needed gives constant term $1$.

(ii) This is Proposition~\ref{prop:units} for $\mRR$ applied to $f = 1 +
zF$ (whose constant term is the unit $1$): $f$ is a unit iff it is
nowhere vanishing.  Elements of $M$ are non-units, so those of the form
$1 + zF$ must vanish somewhere in $\D$.

(iii) By part~(i) every element of $M$ has the form $z^k h$ with $h \in
M$ of constant term $\pm 1$ (remove the leading power of $z$; $z \notin
M$ and primeness keep the cofactor in $M$).  Since $z^k \in \mRR$ and $h$
(or $-h$) has constant term $1$, the unit-constant-term elements of $M$
generate $M$.

(iv) Immediate from Lemma~\ref{lem:unitquotient}.
\end{proof}

\begin{definition}
For a type~(iii) maximal ideal $M$, define its \emph{analytic invariant}
\[
  \mathcal{Z}(M) = \{ Z(f) : f \in M,\; f = 1 + zF \text{ for some }
  F \in \mRR \},
\]
where $Z(f) \subset \D$ is the zero set of $f$ counted with
multiplicities.
\end{definition}

\begin{corollary}[$\mathcal{Z}$ is complete on type~(iii) maximals]
\label{cor:Zinvariant}
For type~(iii) maximal ideals $M_1, M_2 \subset \mRR$,
\[
  M_1 = M_2 \iff \mathcal{Z}(M_1) = \mathcal{Z}(M_2).
\]
\end{corollary}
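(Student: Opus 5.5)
The forward direction is immediate from the definition of $\mathcal{Z}$, so all the work is in the converse. Assume $\mathcal{Z}(M_1) = \mathcal{Z}(M_2)$. By symmetry it suffices to show $M_1 \subseteq M_2$. Maximality of $M_1$ then gives $M_1 = M_2$; alternatively, we can simply run the argument in both directions.

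To prove $M_1 \subseteq M_2$, we use Proposition~\ref{prop:interesting}(iii): $M_1$ is generated as an ideal by its elements of the form $1 + zF$. So it is enough to show that every such generator $f = 1 + zF \in M_1$ lies in $M_2$. For such an $f$, the divisor $Z(f)$ lies in $\mathcal{Z}(M_1) = \mathcal{Z}(M_2)$. Hence there is some $g \in M_2$ with $g = 1 + zG$ and $Z(g) = Z(f)$, with multiplicities; this is exactly the meaning of membership in $\mathcal{Z}(M_2)$ as defined.

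Both $f$ and $g$ have constant term $1$ and the same zero divisor. By Lemma~\ref{lem:unitquotient} (equivalently Proposition~\ref{prop:interesting}(iv)), $u = f/g$ is a unit of $\mRR$. Then $f = u g \in M_2$, since $M_2$ is an ideal and $u \in \mRR$. This shows that every generator of $M_1$ lies in $M_2$, so $M_1 \subseteq M_2$. The symmetric argument gives $M_2 \subseteq M_1$.

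There is no real obstacle; the content sits in the earlier results. The step needing the most care is the reduction to generators. Proposition~\ref{prop:interesting}(iii) uses $z \notin M$ and primeness to strip powers of $z$ from an element of $M_1$. This is legitimate because $M_1$ is of type~(iii), and the definition of $\mathcal{Z}$ refers only to elements with constant term exactly $1$, which is what part~(iii) supplies after a sign change. The only other point to check is that equality of zero sets \emph{with multiplicity} is what $\mathcal{Z}$ records. Without multiplicities, $f/g$ could acquire poles and Lemma~\ref{lem:unitquotient} would fail.
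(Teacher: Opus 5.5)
Your proof is correct and is the same as the paper's: you reduce to the unit-constant-term generators of $M_1$ via Proposition~\ref{prop:interesting}(iii), match each with an element of $M_2$ having the same divisor, and conclude with Lemma~\ref{lem:unitquotient} (Proposition~\ref{prop:interesting}(iv)) together with maximality or symmetry. One caveat concerns only your closing commentary: stripping powers of $z$ yields a cofactor with nonzero constant term, not necessarily $\pm 1$, so a sign change does not suffice. The generation claim in (iii) is better justified by writing any $f \in M_1$ as $\bigl(f + (1-f(0))e\bigr) - (1-f(0))e$ for a fixed $e \in M_1$ with $e(0)=1$. Your argument uses only the statement of (iii), so this does not affect the proof.
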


\begin{proof}
Only ($\Leftarrow$) needs proof.  By
Proposition~\ref{prop:interesting}(iii), $M$ is generated by its
unit-constant-term set $S_M = \{f \in M : f = 1 + zF\}$.  If
$\mathcal{Z}(M_1) = \mathcal{Z}(M_2)$ then for each $f \in S_{M_1}$ there
is $g \in S_{M_2}$ with $Z(g) = Z(f)$; by
Proposition~\ref{prop:interesting}(iv), $f = ug$ for a unit $u$, so $f
\in M_2$.  Hence $S_{M_1} \subseteq M_2$ and $M_1 \subseteq M_2$; by
maximality $M_1 = M_2$.  The reverse inclusion is symmetric.
\end{proof}

\begin{remark}
\label{rem:Zunion}
The family $\mathcal{Z}(M)$ is closed under finite unions: if $f, g \in
M$ then $fg \in M$ and $Z(fg) = Z(f) \cup Z(g)$.  By
Theorem~\ref{thm:main}, every admissible divisor in $\D$ is $Z(h)$ for
some $h \in \mRR$ with constant term $1$, so $\mathcal{Z}(M)$ takes
values in families of admissible divisors.
\end{remark}

The invariant extends to all of $\mathfrak{P}_1$.  For $P \in
\mathfrak{P}_1$, define $\mathcal{Z}(P)$ by the same formula.

\begin{proposition}[$\mathcal{Z}$ is complete on $\mathfrak{P}_1$]
\label{prop:prime1invariant}
For $P_1, P_2 \in \mathfrak{P}_1$: $P_1 = P_2 \iff \mathcal{Z}(P_1) =
\mathcal{Z}(P_2)$.
\end{proposition}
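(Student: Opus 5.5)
The forward implication is immediate, since $\mathcal{Z}(P)$ is defined directly from $P$. For ($\Leftarrow$) I would mirror the proof of Corollary~\ref{cor:Zinvariant}, but replace the appeal to maximality with a generation statement valid for every $P \in \mathfrak{P}_1$.

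The first step is to establish the analogue of Proposition~\ref{prop:interesting}(iii) for an arbitrary $P \in \mathfrak{P}_1$: the set $S_P = \{f \in P : f = 1 + zF\}$ generates $P$. Two observations give this.
\begin{itemize}
\item Since $P$ contains some $1+zF$, it cannot contain $z$. Otherwise $1 = (1+zF) - zF \in P$, which is the computation already recorded after the definition of the classes.
\item Take any nonzero $f \in P$ and write $f = z^k u$ with $u(0) \neq 0$. Because $P$ is prime and $z \notin P$, we get $u \in P$.
\end{itemize}
The element $u$ need not have unit constant term, say $u(0)=c$. To fix this, pick $s = 1 + zF \in S_P$ and form
\[
  v = u - (c-1)s .
\]
Then $v \in P$ and $v(0) = 1$, so $v \in S_P$. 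Hence $u = v + (c-1)s$ lies in the ideal generated by $S_P$, and so does $f = z^k u$. Thus $P = (S_P)$.

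The second step is the comparison. Suppose $\mathcal{Z}(P_1) = \mathcal{Z}(P_2)$ and take $f \in S_{P_1}$. Its divisor $Z(f)$ lies in $\mathcal{Z}(P_2)$, so there is $g \in S_{P_2}$ with $Z(g) = Z(f)$. Lemma~\ref{lem:unitquotient} gives $f = hg$ with $h \in \mRR^\times$, so $f \in P_2$. Therefore $S_{P_1} \subseteq P_2$, and by the first step $P_1 = (S_{P_1}) \subseteq P_2$. By symmetry $P_2 \subseteq P_1$, which gives equality without any appeal to maximality.

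The only real obstacle is the generation step for non-maximal primes. The proof of Proposition~\ref{prop:interesting}(iii) uses the fact that constant terms of elements of $M$ fill $\Z$, and that fact came from maximality. Here it is replaced by the translation $u - (c-1)s$, which shows that every element of $P$ with nonzero constant term lies in the ideal generated by $S_P$. I expect that to be the only point requiring care.
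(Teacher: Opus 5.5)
Your proof is correct and follows the paper's argument: you show $P=(S_P)$ using $z\notin P$, then use the unit quotient lemma to get $S_{P_1}\subseteq P_2$, and symmetry finishes. Your translation $u-(c-1)s$ is exactly the detail behind the paper's terse remark that the constant terms of $P$ fill $\Z$. In fact it applies to any $f\in P$ directly, since $f-(f(0)-1)s$ has constant term $1$, so stripping the power $z^k$ and invoking primeness at that step is not needed.
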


\begin{proof}
The forward direction is trivial.  For the reverse, first note that
unit-constant-term elements generate any $P \in \mathfrak{P}_1$.  Given
$f \in P$, write $f = z^k(a_0 + zF)$; since $P$ contains some $1 + zG$,
we have $z \notin P$ (else $1 = (1 + zG) - z \cdot G \in P$), so
primeness gives $a_0 + zF \in P$.  The constant terms of elements of $P$
form an ideal of $\Z$ containing $1$, hence all of $\Z$, so every element
of $P$ is an $\mRR$-combination of unit-constant-term elements of $P$.

Now if $\mathcal{Z}(P_1) = \mathcal{Z}(P_2)$, for each unit-constant-term
$f \in P_1$ there is a unit-constant-term $g \in P_2$ with $Z(g) =
Z(f)$; by Proposition~\ref{prop:interesting}(iv), $f = ug$ for a unit
$u$, so $f \in P_2$.  Hence $P_1 \subseteq P_2$, and by symmetry $P_1 =
P_2$.
\end{proof}

\begin{proposition}[Partition property of $\mathcal{F}_Z(P)$]
\label{prop:prime1ultrafilter}
Let $P \in \mathfrak{P}_1$ and $f = 1 + zF \in P$ with $Z = Z(f)$.  Put
\[
  \mathcal{F}_Z(P) = \{ Z(h) : h \in P,\; h = 1 + zH,\; Z(h) \subseteq Z \}.
\]
For every partition $Z = Z_1 \sqcup Z_2$, at least one of $Z_1, Z_2$ lies
in $\mathcal{F}_Z(P)$.
\end{proposition}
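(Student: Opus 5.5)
The plan is to realize each piece of the partition as the zero divisor of an element of $\mRR$ with constant term $1$, to compare the product of these two elements with $f$ via Lemma~\ref{lem:unitquotient}, and then to use primeness of $P$. Throughout I read ``partition'' as a splitting of the divisor $Z$ (with multiplicities) into two effective divisors $Z_1, Z_2$. For $\mRR$ the pieces should also be conjugation-invariant. This is the same admissibility convention as in Remark~\ref{rem:Zunion}, and it is the only way each piece can be a zero divisor of an element of $\mRR$. In the Gaussian ring $\mR$ no such restriction is needed, since Theorem~\ref{thm:Zi} applies in place of Theorem~\ref{thm:main}.

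\emph{Step 1: realize the pieces.} By Theorem~\ref{thm:main}, in the form recorded right after Theorem~\ref{thm:Zi}, choose $f_1, f_2 \in \mRR$ with constant term $1$ and $Z(f_1) = Z_1$, $Z(f_2) = Z_2$. If one piece is empty, take the corresponding $f_i = 1$.

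\emph{Step 2: compare with $f$.} The product $f_1 f_2$ lies in $\mRR$, has constant term $1$, and satisfies $Z(f_1 f_2) = Z_1 + Z_2 = Z = Z(f)$ with multiplicities. Lemma~\ref{lem:unitquotient} then gives $f = u\, f_1 f_2$ for some $u \in \mRR^\times$. Since $f \in P$, it follows that $f_1 f_2 = u^{-1} f \in P$.

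\emph{Step 3: primeness.} Since $P$ is prime, $f_1 \in P$ or $f_2 \in P$; say $f_i \in P$. Then $f_i$ has the form $1 + zH$ with $H \in \mRR$. Indeed, $f_i - 1 \in \mRR$ vanishes at $0$, so $H = (f_i - 1)/z$ has integer coefficients and is holomorphic on $\D$. Moreover $Z(f_i) = Z_i \subseteq Z$. Hence $Z_i \in \mathcal{F}_Z(P)$, which proves the claim.

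The only real obstacle is Step 1. For an arbitrary set-theoretic partition of the zero set, a piece need not be conjugation-invariant, and then it is not the zero divisor of any element of $\mRR$ at all. So the proof must make explicit that the partition is taken among admissible divisors, with multiplicities split so that each piece is invariant. Given that convention, Steps 2 and 3 are formal.
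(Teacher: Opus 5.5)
Your proof is correct and follows the paper's argument: realize $Z_1, Z_2$ by constant-term-$1$ elements via Theorem~\ref{thm:main}, identify $f_1 f_2$ with $f$ up to a unit via Lemma~\ref{lem:unitquotient}, and conclude by primeness of $P$. Your restriction to conjugation-invariant pieces (split with multiplicities) is not an extra hypothesis, since the paper's proof needs it too when it applies Theorem~\ref{thm:main} to each $Z_i$: for $a \notin \R$ and $Z = \{a, \bar a\}$, neither $\{a\}$ nor $\{\bar a\}$ is the zero set of any element of $\mRR$, so the literal statement fails for that split and your reading is the right one.
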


\begin{proof}
By Theorem~\ref{thm:main}, obtain $f_1, f_2 \in \mRR$ with constant term
$1$ and $Z(f_i) = Z_i$.  Then $f_1 f_2$ has constant term $1$ and $Z(f_1
f_2) = Z_1 \cup Z_2 = Z = Z(f)$, so by
Proposition~\ref{prop:interesting}(iv), $f_1 f_2 = u f$ for a unit $u$.
As $f \in P$, $f_1 f_2 \in P$; primeness gives $f_1 \in P$ or $f_2 \in
P$, i.e.\ $Z_1 \in \mathcal{F}_Z(P)$ or $Z_2 \in \mathcal{F}_Z(P)$.
\end{proof}

\begin{remark}
The partition property is all that follows from primeness alone.  The
full ultrafilter property---that \emph{exactly} one of $Z_1, Z_2$ lies in
$\mathcal{F}_Z(P)$---requires that $P$ not contain elements with disjoint
zero sets, which depends on maximality or further structural information
about $P$.
\end{remark}

\section{Ultrafilters give prime ideals via ultraproducts}
\label{sec:ultra}

\begin{proposition}[Ultraproduct construction]
\label{prop:ultraproductprime}
Let $Z$ be an admissible divisor in $\D$ and $\mathcal{U}$ an ultrafilter
on the underlying point set $|Z|$ (ignoring multiplicities).  For each $W
\in \mathcal{U}$ let $f_W \in \mRR$ have constant term $1$ and $Z(f_W) =
W$ as a set (Theorem~\ref{thm:main} supplies such a function).  Define
\[
  \Phi_\mathcal{U} \colon \mRR \to \prod_{a \in Z} \C \Big/ \mathcal{U},
  \qquad \Phi_\mathcal{U}(h) = [(h(a))_{a \in Z}]_\mathcal{U}.
\]
Then:
\begin{enumerate}
\item[(i)] $\Phi_\mathcal{U}$ is a ring homomorphism to a field.  Here
  $\prod_{a \in Z} \C / \mathcal{U}$ is the ultraproduct of copies of
  $\C$ along $\mathcal{U}$; since $\mathcal{U}$ is a proper ultrafilter
  and each factor is a field, {\L}o\'s's theorem \cite{CK} implies the
  ultraproduct is a field.
\item[(ii)] $P_\mathcal{U} = \ker(\Phi_\mathcal{U}) \in \mathfrak{P}_1$:
  it is prime (kernel of a map to a field) and contains $f_Z$, which has
  constant term $1$.
\item[(iii)] $\mathcal{F}_Z(P_\mathcal{U}) = \mathcal{U}$: for $h = 1 +
  zH$ with $Z(h) \subseteq Z$, one has $h \in P_\mathcal{U}$ iff $\{a
  \in Z : h(a) = 0\} = Z(h) \in \mathcal{U}$.
\item[(iv)] The map $\mathcal{U} \mapsto P_\mathcal{U}$ is injective: if
  $P_{\mathcal{U}_1} = P_{\mathcal{U}_2}$ then $\mathcal{U}_1 =
  \mathcal{F}_Z(P_{\mathcal{U}_1}) = \mathcal{F}_Z(P_{\mathcal{U}_2}) =
  \mathcal{U}_2$.
\end{enumerate}
\end{proposition}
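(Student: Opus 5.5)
The plan is to verify the four items in order. Items (i) and (ii) are formal, (iii) is a pointwise computation, and (iv) follows from (iii). Throughout, $\prod_{a\in Z}\C/\mathcal{U}$ means sequences indexed by the point set $|Z|$, modulo agreement on a set in $\mathcal{U}$.

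For (i), $h\mapsto (h(a))_{a\in|Z|}$ is a ring homomorphism $\mRR\to\prod_{a}\C$, being a product of evaluation homomorphisms, and composing with the quotient map keeps it a homomorphism. To see that the target is a field I would argue directly rather than cite {\L}o\'s's theorem. The ideal of sequences vanishing on a set in $\mathcal{U}$ is proper, since $\emptyset\notin\mathcal{U}$. If $[x]\neq 0$, then $S=\{a: x_a\neq 0\}$ lies in $\mathcal{U}$ by the ultrafilter dichotomy. Defining $y_a=x_a^{-1}$ on $S$ and $0$ off $S$ gives $[x][y]=1$.

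For (ii), $P_\mathcal{U}$ is prime as the kernel of a map into a field, and it is proper because $\Phi_\mathcal{U}(1)=1\neq 0$. Since $|Z|\in\mathcal{U}$ and $f_Z$ vanishes at every $a\in|Z|$, we have $\Phi_\mathcal{U}(f_Z)=0$. As $f_Z$ has constant term $1$, this puts $P_\mathcal{U}$ in $\mathfrak{P}_1$.

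For (iii), take $h=1+zH$ with $Z(h)\subseteq Z$. Then $\{a\in|Z|: h(a)=0\}=|Z(h)|$, and by definition $\Phi_\mathcal{U}(h)=0$ exactly when this set lies in $\mathcal{U}$. This gives $\mathcal{F}_Z(P_\mathcal{U})=\{Z(h): |Z(h)|\in\mathcal{U}\}$. To get equality with $\mathcal{U}$ itself, every $W\in\mathcal{U}$ would have to be of the form $|Z(h)|$, which by Theorem~\ref{thm:main} requires $W$ to be invariant under conjugation.

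This is the step I expect to be the real obstacle, and I would treat it explicitly rather than paper over it. In $\mRR$ every $h$ satisfies $h(\bar a)=\overline{h(a)}$, so $\Phi_\mathcal{U}(h)=0$ depends only on the trace of $\mathcal{U}$ on conjugation-invariant sets. Concretely, the principal ultrafilters at $a$ and at $\bar a$, for non-real $a\in|Z|$, both give the kernel $\{h: h(a)=0\}$. So (iii) must be read as $\mathcal{F}_Z(P_\mathcal{U})=\{W\in\mathcal{U}: W=\overline{W}\}$ (with multiplicities from $Z$). Correspondingly, (iv) holds as stated only modulo the involution $\mathcal{U}\mapsto\overline{\mathcal{U}}$.

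I would therefore prove (iv) in two forms. First, over $\mR$ it holds verbatim: Theorem~\ref{thm:Zi} realizes every $W\subseteq|Z|$ as $|Z(f_W)|$, so (iii) gives $\mathcal{F}_Z(P_\mathcal{U})=\mathcal{U}$ and injectivity follows by applying $\mathcal{F}_Z$. Second, over $\mRR$, suppose $\mathcal{U}_1,\mathcal{U}_2$ agree on all invariant sets. For any $W\in\mathcal{U}_1$, the set $W\cup\overline W$ is invariant and hence lies in $\mathcal{U}_2$, so $W\in\mathcal{U}_2$ or $\overline W\in\mathcal{U}_2$. A standard ultrafilter argument, splitting $|Z|$ into real points and a fundamental domain for conjugation, then shows $\mathcal{U}_2\in\{\mathcal{U}_1,\overline{\mathcal{U}_1}\}$. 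In the special case of a conjugation-invariant ultrafilter, or when $|Z|\subset\R$, this recovers injectivity exactly as stated.
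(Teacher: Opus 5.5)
Your diagnosis is correct, and it exposes an error in the statement itself rather than a gap in your argument. The paper proves (iii) only as ``direct from the definition'' and then reads off (iv). The definition yields exactly what you derive: for $h=1+zH$ with $Z(h)\subseteq Z$, one has $h\in P_\mathcal{U}$ iff $|Z(h)|\in\mathcal{U}$. The reverse inclusion $\mathcal{U}\subseteq\mathcal{F}_Z(P_\mathcal{U})$, which (iv) needs, uses the functions $f_W$. Theorem~\ref{thm:main} supplies these only for $W=\overline{W}$. In fact the family $(f_W)_{W\in\mathcal{U}}$ can exist only when $|Z|\subset\R$. For non-real $b\in|Z|$, one of $\{b\}$ and $|Z|\setminus\{b\}$ lies in $\mathcal{U}$, and neither is conjugation-invariant. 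Your example settles the matter. Take $Z=\{a\}+\{\bar a\}$ with $a\notin\R$. Both principal ultrafilters have kernel $P_a=P_{\bar a}$, and $\mathcal{F}_Z(P_a)=\{Z\}$, whose supports do not even form an ultrafilter on $\{a,\bar a\}$. So (iii) and (iv) are false over $\mRR$ as stated, and the paper's proof does not establish them.

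Items (i) and (ii) follow the paper; your inversion on the set $S\in\mathcal{U}$ is just {\L}o\'s's theorem unpacked for fields. Your corrected versions of (iii) and (iv) are the right statements. The kernel depends only on the trace of $\mathcal{U}$ on invariant sets, since $\{a\in|Z|:h(a)=0\}$ is invariant for every $h\in\mRR$. Conversely, Theorem~\ref{thm:main} separates ultrafilters that differ on an invariant set.

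The fundamental-domain step you sketch does go through. If $R=|Z|\cap\R\in\mathcal{U}_1$, then for each $W\in\mathcal{U}_1$ the invariant set $W\cap R$ transfers to $\mathcal{U}_2$, so $\mathcal{U}_1=\mathcal{U}_2$. Now let $F$ be the points of $|Z|$ in the upper half-plane. If $F\in\mathcal{U}_1\cap\mathcal{U}_2$, then $W\cap F=\bigl((W\cap F)\cup\overline{W\cap F}\bigr)\cap F$ transfers likewise. The remaining cases reduce to these by swapping $F$ with $\overline{F}$ or $\mathcal{U}_2$ with $\overline{\mathcal{U}_2}$. Hence the fibres of $\mathcal{U}\mapsto P_\mathcal{U}$ over $\mRR$ are exactly the pairs $\{\mathcal{U},\overline{\mathcal{U}}\}$. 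Injectivity holds precisely at ultrafilters containing $|Z|\cap\R$, and the proposition holds as stated over $\mR$.

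One simplification: for injectivity you need not pass through $\mathcal{F}_Z$. That route requires realizing $W$ with constant term $1$, which for $\mR$ is a refinement of Theorem~\ref{thm:Zi} that the paper asserts only by analogy. Instead, take $W\in\mathcal{U}_1\setminus\mathcal{U}_2$ (invariant, in the $\mRR$ case) and any $g$ with zero set exactly $W$ from Theorem~\ref{thm:Zi} or Theorem~\ref{thm:main}. Then $g\in P_{\mathcal{U}_1}\setminus P_{\mathcal{U}_2}$.
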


\begin{proof}
(i) The reduced product $\prod_{a \in Z}\C/\mathcal{U}$ is the
ultraproduct of copies of $\C$ along the proper ultrafilter
$\mathcal{U}$; by {\L}o\'s's theorem for the first-order theory of fields
it is a field, and $\Phi_\mathcal{U}$ is a ring homomorphism by
coordinatewise evaluation.  (ii) The kernel of a homomorphism to a field
is prime; $f_Z \in P_\mathcal{U}$ since $Z \in \mathcal{U}$ and $f_Z$
vanishes on $Z$.  (iii) Direct from the definition of the ultraproduct
and the restriction $Z(h) \subseteq Z$, which makes $\{a \in Z : h(a) =
0\}$ equal $Z(h)$.  (iv) Immediate from (iii).
\end{proof}

\begin{remark}
The map $\mathcal{U} \mapsto P_\mathcal{U}$ from ultrafilters on
admissible divisors into $\mathfrak{P}_1$ is injective by part~(iv).
Whether it is surjective---whether every element of $\mathfrak{P}_1$
arises as some $P_\mathcal{U}$---is open (Section~\ref{sec:open}).
\end{remark}

\section{Point-evaluation ideals}
\label{sec:pointeval}

For $a \in \D$ let $\mathrm{ev}_a \colon \mRR \to \C$, $f \mapsto f(a)$,
and $P_a = \ker(\mathrm{ev}_a)$.

\begin{proposition}[Primeness]
\label{prop:pointeval_prime}
For each $a \in \D$, the ideal $P_a$ is prime in $\mRR$.
\end{proposition}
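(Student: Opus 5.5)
The plan is to realize $P_a$ as the kernel of a ring homomorphism into an integral domain, and then use the standard fact that such a kernel is prime as soon as it is proper.

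First, $\mathrm{ev}_a \colon \mRR \to \C$ is a ring homomorphism. Each $f \in \mRR$ is an element of $\OD$, and sums and products in $\mRR$ agree with those in $\OD$: for products, the Cauchy product of power series converging on $\D$ converges to the pointwise product. Evaluation at a fixed $a \in \D$ is a ring homomorphism $\OD \to \C$ sending $1 \mapsto 1$. Restricting it to the subring $\mRR \subseteq \OD$ gives $\mathrm{ev}_a$.

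Second, $P_a$ is proper because $\mathrm{ev}_a(1) = 1 \neq 0$, so $1 \notin P_a$.

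Third, I check primeness directly. If $fg \in P_a$, then $f(a)g(a) = 0$ in $\C$. Since $\C$ is a field, hence an integral domain, either $f(a) = 0$ or $g(a) = 0$, so $f \in P_a$ or $g \in P_a$. Equivalently, $\mRR/P_a$ embeds in $\C$ via $\mathrm{ev}_a$, so the quotient is a domain.

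I do not expect a genuine obstacle here. The only point that needs care is that multiplication in $\mRR$ is the same as pointwise multiplication of holomorphic functions, which holds because $\mRR$ is defined as a subring of $\OD$.

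It is also worth noting that $P_a \neq 0$, so the statement is not vacuous. One can see this from Theorem~\ref{thm:main} by taking a function with divisor $\{a,\bar a\}$, or $2\cdot\{a\}$ if $a$ is real. When $a \neq 0$, Theorem~\ref{thm:main} gives such an element with constant term $1$, so $P_a \in \mathfrak{P}_1$. When $a = 0$, we have $P_0 = \mathfrak{n}_0$, which is prime with $\mRR/\mathfrak{n}_0 \cong \Z$.
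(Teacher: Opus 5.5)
Your argument is correct and is the same as the paper's: $P_a$ is the kernel of the unital ring homomorphism $\mathrm{ev}_a\colon \mRR \to \C$ into an integral domain, so it is a proper prime ideal. Your closing remarks on $P_a \neq 0$ and $P_a \in \mathfrak{P}_1$ are not needed for primeness. Also, the constant-term-$1$ claim there relies on the normalized form of Theorem~\ref{thm:main} that the paper asserts just after stating it, not on the theorem as literally stated.
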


\begin{proof}
Since every $f \in \mRR$ is holomorphic on $\D$ and $a \in \D$,
$\mathrm{ev}_a$ is a ring homomorphism, nonzero (the constant $1 \mapsto
1$) with codomain the integral domain $\C$; hence its kernel $P_a$ is
prime.
\end{proof}

\begin{proposition}[Maximality for real points]
\label{prop:pointeval_max_real}
For each $a \in \R \cap \D$, $P_a$ is a maximal ideal of $\mRR$ with
$\mRR/P_a \cong \R$.
\end{proposition}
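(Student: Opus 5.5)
The plan is to show that $\mathrm{ev}_a$ maps $\mRR$ \emph{onto} $\R$. Then $\mRR/P_a \cong \R$ is a field, and $P_a$ is maximal. Two preliminary points come first.

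\emph{The image lies in $\R$.} For $a \in \R \cap \D$ every $f \in \mRR$ has real coefficients, so $f(a) \in \R$.

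\emph{The case $a = 0$ must be excluded.} Here $\mathrm{ev}_0(\mRR) = \Z$ and $P_0 = \mathfrak{n}_0 = (z)$. This is a type~(i) prime with $\mRR/P_0 \cong \Z$, which is not a field. So I would prove the statement for $a \in (\R \cap \D)\setminus\{0\}$ and record that the case $a=0$ must be removed from the statement.

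\emph{Reduction to a positive base.} For $a \neq 0$ put $b = a^2 \in (0,1)$. It suffices to realize every real $c$ as $c = g(b)$ for some $g \in \mRR$, since then $f(z) = g(z^2) \in \mRR$ satisfies $f(a) = c$. Write $c = m + r$ with $m \in \Z$ and $r \in [0,1)$. The constant $m$ is already in $\mRR$, so the task is to find $g = \sum_{n \ge 1} d_n z^n$ with $g(b) = r$. I will take the digits $d_n$ in $\{0,1,\dots,K\}$ for a fixed $K$. Bounded integer coefficients give radius of convergence $\ge 1$, so such a $g$ lies in $\Z[[z]] \cap \OD = \mRR$ automatically.

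\emph{Main step: a greedy $b$-adic expansion.} Fix an integer $K \ge 1/b$, so that $K b \ge 1$, and enlarge $K$ if necessary so that also $Kb/(1-b) \ge 1$. Set $r_0 = r$ and, for $n \ge 1$,
\[
d_n = \min\bigl(K, \lfloor r_{n-1}/b^n \rfloor\bigr), \qquad r_n = r_{n-1} - d_n b^n \ge 0 .
\]
The goal is to show $r_n \to 0$. I would prove by induction the invariant $0 \le r_n \le K b^{n+1}/(1-b)$, i.e.\ the remainder never exceeds the largest value still attainable by the remaining digits.
\begin{itemize}
\item If $d_n < K$, the floor was taken, so $r_n < b^n \le K b^{n+1}$, using $Kb \ge 1$.
\item If $d_n = K$, subtract $K b^n$ from the inductive bound $K b^n/(1-b)$; this leaves at most $K b^{n+1}/(1-b)$.
\item The base case is $r \le 1 \le Kb/(1-b)$.
\end{itemize}
Hence $r_n \to 0$, and $\sum_n d_n b^n = r$.

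This expansion is the only genuinely nontrivial step. It is a Kakeya-type covering condition: when $b$ is small, the digit set $\{0,1\}$ does not suffice and the sums of powers leave gaps, which is why the digit bound must grow like $1/b$. Once $\mathrm{ev}_a$ is known to be surjective onto $\R$, we get $\mRR/P_a \cong \R$, a field, and maximality follows for every nonzero real $a \in \D$.
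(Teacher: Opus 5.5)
Your proof is correct and follows the same strategy as the paper. Both exclude $a=0$ (where $\mRR/P_0\cong\Z$), then show $\mathrm{ev}_a$ is onto $\R$ by a greedy expansion with bounded integer digits, so the resulting series lies in $\mRR$.

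The only difference is in the digits. The paper works directly in base $a$ and takes $c_n$ to be the nearest integer to $r_{n-1}/a^n$. These signed digits give $\abs{r_n}\le\tfrac12\abs{a}^n$ and $\abs{c_n}\le\tfrac{1}{2\abs{a}}+\tfrac12$ immediately, which makes your passage to $b=a^2$ and your capped invariant $r_n\le Kb^{n+1}/(1-b)$ unnecessary.
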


\begin{proof}
Since all coefficients are real integers, $\mathrm{ev}_a(\mRR) \subseteq
\R$ for $a \in \R$.  The case $a = 0$ is excluded ($P_0 =
\mathfrak{n}_0$, with $\mRR/P_0 \cong \Z$ not a field; indeed $z \in
P_0$, so $P_0$ is type~(i)).  For $a \in (-1,1) \setminus \{0\}$ and any
$w \in \R$, define remainders $r_n = w - \sum_{k=0}^n c_k a^k$ by $r_{-1}
= w$ and, at step $n$, $c_n \in \Z$ the nearest integer to
$r_{n-1}/a^n$, so $r_n = r_{n-1} - c_n a^n$ and $\abs{r_n} \leq
\tfrac12\abs{a}^n$.

The bound $\abs{a} < 1$ keeps $\{c_n\}$ bounded: for $n \geq 1$,
$\abs{r_{n-1}} \leq \tfrac12\abs{a}^{n-1}$, so $\abs{r_{n-1}/a^n} \leq
\tfrac{1}{2\abs{a}}$ and $\abs{c_n} \leq \tfrac{1}{2\abs{a}} + \tfrac12$,
a constant independent of $n$.  Hence $\sup_n \abs{c_n} < \infty$, so
$\limsup \abs{c_n}^{1/n} = 1$ and $f = \sum_{n \geq 0} c_n z^n \in \mRR$.
Since $\abs{r_N} \leq \tfrac12\abs{a}^N \to 0$, the partial sums
$\sum_{k=0}^N c_k a^k = w - r_N \to w$, so $f(a) = w$.  Thus
$\mathrm{ev}_a \colon \mRR \twoheadrightarrow \R$ is surjective, $\mRR/P_a
\cong \R$ is a field, and $P_a$ is maximal.
\end{proof}

\begin{proposition}[Maximality in $\mR$]
\label{prop:pointeval_max_gaussian}
For each $a \in \D$, $P_a = \ker(\mathrm{ev}_a)$ is a maximal ideal of
$\mR = \Zi[[z]] \cap \OD$, with $\mR/P_a \cong \C$.
\end{proposition}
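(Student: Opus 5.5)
The plan is to imitate the proof of Proposition~\ref{prop:pointeval_max_real}, with Gaussian integers in place of rational integers. The point $a = 0$ has to be excluded exactly as there. Indeed $\mathrm{ev}_0(\mR) = \Zi$, so $\mR/P_0 \cong \Zi$ is not a field, and $P_0 = \mathfrak{n}_0$ contains $z$. So I will prove the statement for $a \in \D \setminus \{0\}$ and flag $a = 0$ as the necessary exception. By the first isomorphism theorem it suffices to show that $\mathrm{ev}_a \colon \mR \to \C$ is surjective. Then $\mR/P_a \cong \C$ is a field, so $P_a$ is maximal; primeness was already noted in Proposition~\ref{prop:pointeval_prime}.

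\textbf{Construction.} Fix $a$ with $0 < \abs{a} < 1$ and a target $w \in \C$. Set $r_{-1} = w$. At step $n \geq 0$, let $c_n \in \Zi$ be a Gaussian integer nearest to $r_{n-1}/a^n$, and put $r_n = r_{n-1} - c_n a^n$. Every complex number lies within distance $\tfrac{\sqrt2}{2}$ of $\Zi$, since the worst case is the centre of a unit square. Hence
\[
  \abs{r_n} = \abs{a}^n \abs{r_{n-1}/a^n - c_n} \leq \tfrac{\sqrt2}{2}\abs{a}^n .
\]

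\textbf{Boundedness and convergence.} For $n \geq 1$ the previous bound gives $\abs{r_{n-1}/a^n} \leq \tfrac{\sqrt2}{2\abs{a}}$. Therefore
\[
  \abs{c_n} \leq \tfrac{\sqrt2}{2\abs{a}} + \tfrac{\sqrt2}{2},
\]
which is a constant independent of $n$. Bounded Gaussian-integer coefficients give radius of convergence at least $1$, so $f = \sum_{n\ge0} c_n z^n$ lies in $\Zi[[z]] \cap \OD = \mR$. The partial sums at $a$ equal $w - r_N$, and these tend to $w$ because $\abs{r_N} \leq \tfrac{\sqrt2}{2}\abs{a}^N \to 0$. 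Hence $f(a) = w$, which proves surjectivity.

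\textbf{Main obstacle.} There is little real difficulty. The only genuine point is the uniform bound on the digits $c_n$, which needs $a \neq 0$; this is exactly why the origin must be set aside. The remaining check is the nearest-Gaussian-integer estimate $\tfrac{\sqrt2}{2}$, which replaces the $\tfrac12$ used in the real case.
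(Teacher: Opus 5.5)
Your proposal is correct and is essentially the paper's argument: the same greedy nearest-Gaussian-integer expansion in base $a$, with the bounds $\abs{r_n} \leq \tfrac{\sqrt2}{2}\abs{a}^n$ and $\abs{c_n} \leq \tfrac{\sqrt2}{2\abs{a}} + \tfrac{\sqrt2}{2}$, shows that $\mathrm{ev}_a \colon \mR \to \C$ is surjective. You are also right to set aside $a = 0$: there $\mR/P_0 \cong \Zi$ is not a field, so the statement as written fails at the origin, and the paper's proof (which divides by $a^n$) tacitly assumes $a \neq 0$, as it does explicitly in the real case.
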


\begin{proof}
The same bounded radix expansion, with $c_n \in \Zi$ chosen nearest to
$r_{n-1}/a^n$, gives $\abs{r_n} \leq \tfrac{\sqrt2}{2}\abs{a}^n$ and
$\abs{c_n} \leq \tfrac{\sqrt2}{2\abs{a}} + \tfrac{\sqrt2}{2}$,
independent of $n$; so $f = \sum c_n z^n \in \mR$ with $f(a) = w$ for any
$w \in \C$.  Hence $\mR/P_a \cong \C$ and $P_a$ is maximal.
\end{proof}

\begin{remark}
Whether $P_a$ is maximal in $\mRR$ for $a \in \D \setminus \R$ remains
open: $\mathrm{ev}_a(\mRR)$ is a subring of $\C$ containing $\Z[a]$, but
surjectivity onto $\C$ with only integer (not Gaussian-integer)
coefficients is an arithmetic question about $a$.
\end{remark}

\section{A conditional B\'ezout property}
\label{sec:bezout}

\begin{theorem}[Conditional B\'ezout property for disjoint divisors]
\label{thm:bezout}
Let $f, g \in \mRR$ have disjoint zero sets in $\D$.  Then $(f,g)$ is not
contained in any type~(i) maximal ideal, nor in any point-evaluation
ideal $P_a$ with $a \in \R \cap \D$.  Consequently, if every type~(iii)
maximal ideal is a point-evaluation ideal $P_a$ for some $a \in \D$, then
$(f,g) = \mRR$.
\end{theorem}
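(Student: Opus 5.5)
The plan has three parts: handle the type~(i) maximal ideals, handle the real point-evaluation ideals $P_a$, and then deduce the conditional B\'ezout statement from the trichotomy of Proposition~\ref{prop:trichotomy}. Throughout, I will argue by contradiction. I assume $(f,g)\subseteq M$ for a maximal ideal $M$ of the relevant kind and produce a common zero of $f$ and $g$ in $\D$, which contradicts disjointness.

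\emph{Point-evaluation ideals.} This case is immediate. If $(f,g)\subseteq P_a$ with $a\in\R\cap\D$, then $\mathrm{ev}_a(f)=\mathrm{ev}_a(g)=0$. So $a\in Z(f)\cap Z(g)$, which is impossible.

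\emph{Type~(i) maximal ideals.} By the Remark after Proposition~\ref{prop:primes_outside}, these are exactly the ideals $(z,p)$. Suppose $f,g\in(z,p)$. Reducing modulo $z$, using $\mRR/(z)\cong\Z$, gives $f(0),g(0)\in p\Z$. The intended conclusion is that $f$ and $g$ then share the zero $a=0$, so $0\in Z(f)\cap Z(g)$, contradicting disjointness. I expect this to be the main obstacle, and I am not sure it can be overcome for the statement as worded. Membership in $(z,p)$ only forces $p\mid f(0)$ and $p\mid g(0)$, not $f(0)=g(0)=0$. For example, $f=g=p$ are nowhere vanishing, so their zero sets are disjoint, yet both lie in $(z,p)$. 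What I would try first is to sharpen the reduction at $0$: reduce modulo $(z,p)$ and look for extra structure forcing an actual common zero at the origin. If that fails, the type~(i) clause can only be established when the constant terms are coprime, which is not part of the hypothesis as stated. I will flag this gap explicitly rather than paper over it.

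\emph{Conditional B\'ezout statement.} Assume every type~(iii) maximal ideal equals $P_a$ for some $a\in\D$. If $(f,g)\neq\mRR$, it lies in some maximal ideal $M$. By Proposition~\ref{prop:trichotomy}, type~(ii) cannot occur, so $M$ is of type~(i) or type~(iii).
\begin{itemize}
\item If $M$ is of type~(iii), then by the assumption $M=P_a$ for some $a\in\D$, and the point-evaluation argument gives a common zero at $a$. No reality assumption on $a$ is needed here, since only $\mathrm{ev}_a(f)=\mathrm{ev}_a(g)=0$ is used.
\item If $M$ is of type~(i), the previous paragraph applies, subject to the caveat there.
\end{itemize}
Either case contradicts disjointness, so $(f,g)=\mRR$.
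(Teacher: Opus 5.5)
Your objection to the type~(i) clause is correct, and the error lies in the statement and the paper's proof, not in your plan. The paper's proof has your three-part structure. It handles type~(i) by asserting that $z\in M$ forces $f(0)=g(0)=0$ ``as $M\cap\Z$ contains no unit.'' That inference is invalid: for $M=(z,p)$ one has $M\cap\Z=p\Z$, which gives only $p\mid f(0)$ and $p\mid g(0)$. Your example $f=g=p$ is a valid counterexample. The failure is not an artifact of empty zero sets either: $f=2-4z$ and $g=2+4z$ vanish only at $1/2$ and $-1/2$ respectively, yet $(f,g)\subseteq 2\mRR\subseteq(z,2)$. No sharpening at the origin can rescue the clause. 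Every $h\in\mRR$ can be written $h=h(0)+zH$ with $H\in\mRR$, so $h\in(z,p)$ if and only if $p\mid h(0)$. Hence $(f,g)$ lies in a type~(i) maximal ideal exactly when $\gcd(f(0),g(0))\neq 1$, and coprime constant terms, as you guessed, is the right hypothesis for that clause. Your treatment of the real $P_a$ coincides with the paper's and is fine.

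You should drop your last paragraph, which still concludes $(f,g)=\mRR$ ``subject to the caveat.'' Your own example refutes that conclusion outright: $(p,p)=(p)$ is proper no matter what is assumed about type~(iii) maximal ideals. So no argument through the trichotomy can prove the ``consequently'' clause; it can only be vacuously true. It is vacuously true, because its hypothesis fails, as the following construction shows.
\begin{itemize}
\item Let $a_n=1-1/n$ for $n\ge 2$. By Theorem~\ref{thm:main}, with the constant-term-$1$ normalization recalled in Section~\ref{sec:recall}, pick $h_k\in\mRR$ with $h_k(0)=1$ and zero set exactly $\{a_n:n\ge k\}$.
\item Any combination of $h_{k_1},\dots,h_{k_r}$ vanishes at $a_n$ for $n\ge\max_i k_i$, so $(h_2,h_3,\dots)$ is proper and lies in some maximal ideal $M$.
\item $M$ contains an element with constant term $1$, so $z\notin M$, and $M$ is type~(iii) by Proposition~\ref{prop:trichotomy}.
\item For each $a\in\D$ some $h_k$ has $h_k(a)\neq 0$, so $M\neq P_a$.
\end{itemize}
This also answers negatively the question the paper lists as central in Section~\ref{sec:open}. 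As stated, then, the theorem's first clause is false and its second holds only vacuously. The paper's argument establishes neither.
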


\begin{proof}
Suppose $(f,g) \subseteq M$ for a maximal ideal $M$.  By
Proposition~\ref{prop:trichotomy}, $M$ is type~(i) or type~(iii).

\emph{Type~(i):} $z \in M$ forces $f(0) = g(0) = 0$ (as $M \cap \Z$
contains no unit), giving $0 \in Z(f) \cap Z(g)$, contradicting
disjointness.

\emph{Type~(iii), $M = P_a$ with $a \in \R \cap \D$:} then $f(a) = g(a)
= 0$, so $a \in Z(f) \cap Z(g)$, again contradicting disjointness.

If every type~(iii) maximal ideal is such a $P_a$, then $(f,g)$ lies in
no maximal ideal, hence $(f,g) = \mRR$.
\end{proof}

\begin{remark}
The conclusion is non-constructive even conditionally: Zorn's lemma
produces the maximal ideal and no explicit cofactors are exhibited.
Whether every type~(iii) maximal ideal is a point-evaluation ideal is the
key open question; see Section~\ref{sec:open}.
\end{remark}

\begin{remark}[Partial structure of $\mathrm{Spec}(\mRR)$]
\label{rem:spec}
Assembling the above: outside $\mathfrak{P}_1$ the primes are the $(p)$
(non-maximal) and the $(z,p)$ (maximal, type~(i)), by
Proposition~\ref{prop:primes_outside}.  Inside $\mathfrak{P}_1$ we have
the point-evaluation ideals $P_a$---maximal for $a \in \R \cap \D$
(Proposition~\ref{prop:pointeval_max_real}), prime for all $a \in \D$
(Proposition~\ref{prop:pointeval_prime})---and the kernels
$P_\mathcal{U}$ of ultraproduct maps, with $\mathcal{F}_Z(P_\mathcal{U})
= \mathcal{U}$ (Proposition~\ref{prop:ultraproductprime}).  Whether every
type~(iii) maximal ideal is a $P_a$, whether $P_a$ is maximal for
non-real $a$, and whether every element of $\mathfrak{P}_1$ is maximal
are open.
\end{remark}

\section{Further Questions}
\label{sec:open}

\begin{itemize}
\item \textbf{Classification of type~(iii) maximal ideals.}  The
  point-evaluation ideals $P_a$ ($a \in \D$) are type~(iii) maximal
  ideals.  Whether every type~(iii) maximal ideal arises this way ---
  equivalently, whether $\mathfrak{M} \cap \mathfrak{P}_1$ consists
  exactly of the $P_a$ --- is the central open question.  A
  positive answer yields the full B\'ezout property
  (Theorem~\ref{thm:bezout}) unconditionally.

\item \textbf{Surjectivity of the ultraproduct map.}  The injection
  $\mathcal{U} \mapsto P_\mathcal{U}$
  (Proposition~\ref{prop:ultraproductprime}) parametrizes a family of
  primes in $\mathfrak{P}_1$ by ultrafilters on admissible divisors.
  Whether it is surjective onto $\mathfrak{P}_1$, and whether the
  $P_\mathcal{U}$ are maximal, is open.

\item \textbf{Maximality inside $\mathfrak{P}_1$.}  Is every element of
  $\mathfrak{P}_1$ maximal?  Is $P_a$ maximal in $\mRR$ for non-real $a
  \in \D$?

\item \textbf{Surjectivity of $\phi$.}  Whether every maximal ideal of
  $\mR$ not containing $\mathfrak{n}_0$ arises by contraction from $\OD$
  is open; this would require $\mathfrak{n} \cdot \OD$ proper for every
  such maximal $\mathfrak{n} \subset \mR$.  (The contraction $\phi$ and
  its injectivity are from \cite{PaperI}.)

\item \textbf{Finer unit structure.}  Proposition~\ref{prop:units}
  characterizes units; the finer structure of $\mR^\times$ and
  $\mRR^\times$ is discussed in \cite{PaperI} and not revisited here.
\end{itemize}

\section*{Declaration on the Use of Artificial Intelligence}
In preparing this paper, the authors made use of AI language model
assistance (Claude) as a tool in the writing process---including
\LaTeX{} preparation and editing---and in exploratory work, such as
proof-checking and verifying arguments. All mathematical content and
results have been independently reviewed and verified by the authors,
who take full responsibility for the correctness, originality, and
presentation of this work. No AI system is an author of this paper,
nor was any AI system used to generate mathematical claims that were
not subsequently checked by the authors.

\end{document}